\documentclass[11pt]{articlefederico}
\usepackage{graphicx}
\usepackage{amsfonts }
\usepackage{amsmath}
\usepackage{fullpage}
\usepackage{amssymb}
\usepackage{amsthm}
\usepackage{tikz}
\usepackage{lipsum}
\usepackage{mathrsfs}
\usepackage{hyperref}
\usepackage{float}
\usepackage{multicol}


\graphicspath{{figures/}}
\usepackage{array}
\usepackage{enumitem}
\usepackage[mathscr]{euscript}


\graphicspath{{figures/}}

\newcounter{counter}
\theoremstyle{definition}
\newtheorem{definition}[counter]{Definition}

\newtheorem{theorem}[counter]{Theorem}

\newtheorem{proposition}[counter]{Proposition}


\newcommand*{\e}{\mathsf{e}}
\newcommand*{\R}{\mathbb{R}}
\newcommand*{\Z}{\mathbb{Z}}
\newcommand*{\A}{\mathcal{A}}

\begin{document}

\title{\textsf{The Degree of a Tropical Root Surface of Type A }}

\author{
\textsf{Federico Ardila--Mantilla\footnote{\noindent \textsf{San Francisco State University, Universidad de Los Andes; federico@sfsu.edu. Supported by NSF Award DMS-2154279.}}}\\
\and \textsf{Mont Cordero Aguilar\footnote{\noindent \textsf{University of Washington, mont@uw.edu.}}}}

\date{}

\maketitle

%


\begin{abstract}
We prove that the tropical surface of the root system $A_{n-1}$ has degree 
$\frac{1}{2}n (n-1)(n-2)$.
\end{abstract}

\section{\textsf{Introduction}}

Tropical geometry was developed to answer questions in classical algebraic geometry combinatorially. Tropicalization converts a projective variety $V$ into a polyhedral complex $\mathrm{trop}(V)$ that, roughly speaking, records the behavior of $V$ at infinity. The tropical variety $\mathrm{trop}(V)$ retains a surprising amount of information about $V$, such as its dimension and  degree. 
Many important invariants of $\mathrm{trop}(V)$ can be computed using combinatorics and discrete geometry, thus giving computations of algebro-geometric invariants of $V$. 
For detailed introductions to tropical geometry, see \cite{BS14, maclagan2015introduction, mikhalkin2010tropical}.

Initially, tropical geometry was most interested in studying tropicalizations of algebraic varieties of importance. However, a more robust theory arises when one considers abstract tropical varieties, most of which do not arise via tropicalization. This is analogous to the situation in matroid theory, where a linear subspace $V$ of a vector space gives rise to a matroid $M_V$, but a  more robust theory arises when one considers all matroids, most of which do not arise from a linear subspace. (This is not just an analogy: matroids may be understood as the tropical fans of degree 1 \cite{AK06, fink}.) 

Tropical geometry is then a rich source of well motivated combinatorial problems of significance within and beyond combinatorics. A good theory needs good examples, and combinatorics is a rich source of tropical varieties. In this spirit, Ardila, Kato, McMillon, Perez, and Schindler \cite{poster} constructed tropical surfaces associated in a natural way to the classical root systems. They  proved that the tropical Laplacians of these surfaces have exactly one negative eigenvalue, as one might predict from the Hodge index theorem. 

The geometric protagonist of this paper is the tropical surface $S(A_{n-1})$ associated to the root system $A_{n-1}$ of the special linear Lie algebra $\mathfrak{sl}_n$. Our main result is that this surface has degree $\frac{1}{2}n (n-1)(n-2)$.

\section{\textsf{Background.}} 




Let $n$ be a positive integer and write $[ n ] = \{1,2,\ldots,n\}$. Let $\e_1, \ldots, \e_n$ be the standard basis of $\R^n$, and write $\e_S = \displaystyle \sum_{s \in S} \e_s$ for each subset $S \subseteq [n]$.

\subsection{\textsf{Root Systems}}
Let us begin by defining root systems and root polytopes.

\begin{definition}
   \cite{bourbaki} A \textbf{crystallographic root system} $\Phi$ is a set of vectors in $\mathbb{R}^n$ satisfying:
    	\begin{itemize}
    		\item For every root $\beta \in \Phi$ , the set $\Phi$ is closed under reflection across the hyperplane perpendicular to $\beta$.
    		\item For any two roots $\alpha,\beta \in \Phi, \text { the quantity } 2\frac{\langle \alpha, \beta \rangle}{\langle \alpha, \alpha \rangle}$ is an integer, where $\langle - , - \rangle$ is the standard inner product in $\mathbb{R}^n$.
    		\item If $\beta, c\beta \in \Phi$ for $c \in \R$, then $c=1 \text{ or } c=-1$.
    	\end{itemize}
    	\end{definition}
 
\begin{definition}
\cite{bourbaki}
    An \textbf{irreducible root system} is one that cannot be partitioned into the union of two proper subsets $\Delta = \Delta_1 \cup \Delta_2$, such that $\langle \alpha,\beta \rangle = 0$ for all $\alpha \in \Delta_1 \text{ and } \beta \in \Delta_2$.
\end{definition}

Root systems play a fundamental role in many areas of mathematics; for example, they are key to the classification of semisimple Lie algebras \cite{bourbaki}. 
 The irreducible root systems have been classified into four infinite \textbf{classical families} and five \textbf{exceptional} root systems. In this paper we focus on the most classical family:
 \[
A_{n-1} = \{ \e_i - \e_j : i,j \in [n], i \neq j\}.
\]
This is the root system of the special linear Lie algebra $\mathfrak{sl}_n$. 

 \begin{definition}
    The \textbf{root polytope} $P(\Phi)$ of a root system $\Phi$ is the convex hull of $\Phi$.
\end{definition}


%

\subsection{\textsf{Tropical Geometry}} \label{sec:tropgeom}

To define the root surfaces $S(A_{n-1})$ that interest us, we first introduce some basic definitions from tropical geometry.

A \textbf{cone} is a set of the form
\[
\textrm{cone}(v_1,\ldots, v_n) = \{\lambda_1 v_1 + \cdots + \lambda_n v_n : \lambda_1, \lambda_2, . . . , \lambda_n \geq 0\}
\]
for vectors $v_1, \ldots, v_n$ in $\R^d$. The cone is \textbf{rational} if it is generated by integer vectors. 
A (rational) \textbf{polyhedral fan} is a nonempty finite collection $\Sigma$ of (rational) cones in $\mathbb{R}^d$ 
such that every face of a cone in $\Sigma$ is also in $\Sigma$, and the intersection of any two cones in $\Sigma$ is a face of both of them.
A fan is  \textbf{pure} of dimension $d$ if all maximal faces are $d$-dimensional. We let $\Sigma^i$ denote the set of cones of $\Sigma$ of dimension $i$.
Tropical fans are those that meet the following balancing condition.
 
\begin{definition} \label{def:balcond}
\cite{maclagan2015introduction}
Let $\Sigma \subseteq \mathbb{R}^n$ be a rational polyhedral fan, pure of dimension $d$, with a choice of weight $w(\sigma) \in \mathbb{N}$ for each maximal cone $\sigma \in \Sigma^d$.

%

For each $(d-1)$-cone $\tau \in \Sigma^{d-1}$, consider the $(d-1)$-subspace $L_\tau \subseteq \R^n$ spanned by $\tau$, the induced $(d-1)$-lattice $L_{\tau,\mathbb{Z}} = L_\tau \cap \mathbb{Z}^n$, and the quotient $(n-d+1)$-lattice $N_\tau = \Z^n/L_{\tau,\mathbb{Z}}$. 
Each $d$-cone $\sigma \in \Sigma^d$ with $\sigma \supset \tau$ determines a ray $(\sigma+L_\tau)/L_\tau$ in $\R^n/L_\tau$. This ray is rational with respect to the lattice $N_\tau$; let $u_{\sigma/\tau}$ be the first lattice point on this ray. The fan $\Sigma$ is \textbf{balanced at $\tau$} if the following relation holds in $\R^n/L_\tau$:
    \[
 \sum_{\sigma \in \Sigma^d \, : \, \sigma \supset \tau} w(\sigma)u_{\sigma/\tau} = 0
 \]
    The fan $\Sigma$ is a  \textbf{tropical fan} if it is balanced at all faces of dimension $d - 1$.
\end{definition}

Tropical varieties are more general than tropical fans; see \cite{mikhalkin2010tropical} for a definition. \textbf{Tropical surfaces} are tropical varieties that are pure of dimension 2. In particular, 2-dimensional tropical fans are tropical surfaces.

\begin{definition}\label{def_root_surf}
   The \textbf{tropical root surface} $S(\Phi)$ of a root system $\Phi$ is the cone  over the 1-skeleton of $P(\Phi)$ with unit weights on all facets. It consists of:
   \begin{itemize}
       \item Rays: cone$(r)$ for each $r \in \Phi$.
       \item Facets: cone$(r,s)$ for each $r,s \in \Phi$ such that $rs$ is an edge of the root polytope $P(\Phi)$.
       \item Weights: $w(\sigma) = 1$ for every facet $\sigma$. 
   \end{itemize}
\end{definition}

 \begin{figure}[h!]
    \begin{center}
     \includegraphics[height = 4.5cm]{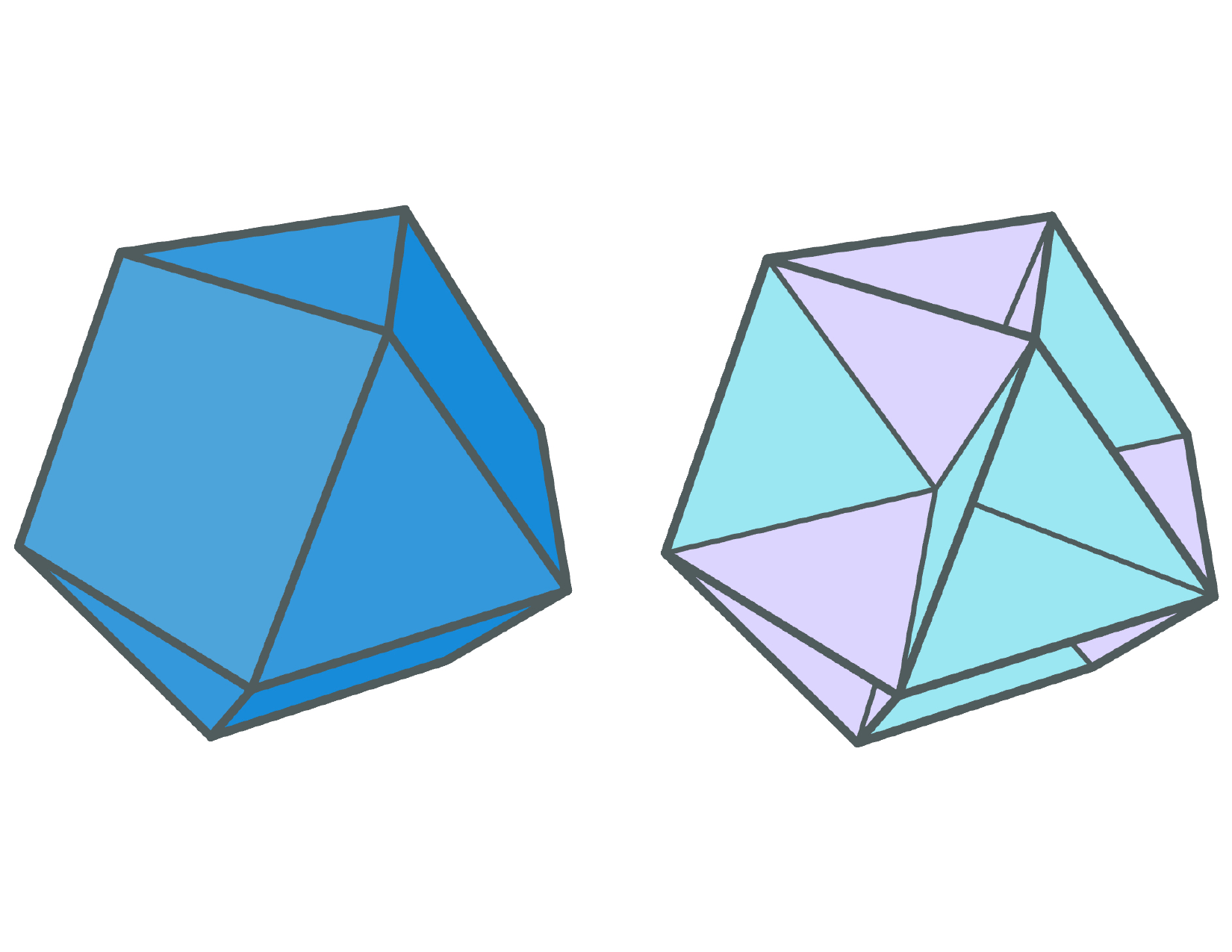} \qquad \qquad  
     \includegraphics[height = 4.5cm]{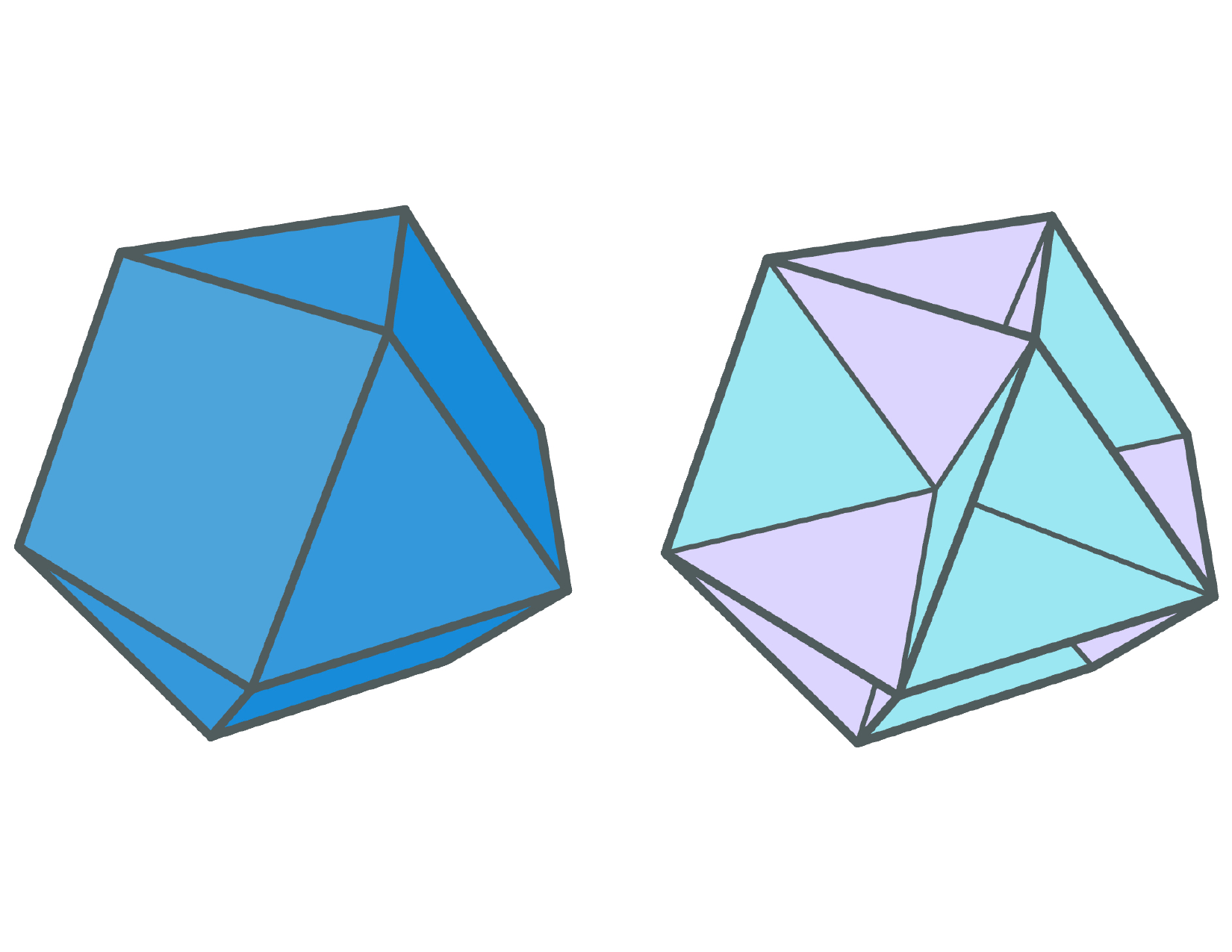} 
    \end{center}
    \caption{The root polytope $P(A_3)$ and the tropical root surface $S(A_3)$. \label{fig:A3} }
\end{figure}

Tropical root surfaces were introduced by \cite{poster, perez_2019, schindler_2017} by Federico Ardila, Chiemi Kato, Jewell McMillon, Maria Isabel Perez, and Anna Schindler. Figure \ref{fig:A3} shows the root polytope and the tropical surface of the root system $A_3$, with its cones truncated for visibility.

In classical algebraic geometry, the degree of an irreducible affine or projective variety of dimension $d$ is obtained by counting its intersection points with a generic linear space of codimension $d$. In tropical geometry, degree is defined similarly. The analog of a generic linear space is a generic shift of the standard tropical linear space of codimension $d$, 
which we now define.

\begin{definition}\label{std-trop-hyp}
    \cite{AK06, maclagan2015introduction} 
The \textbf{standard tropical linear space} $\Sigma_{n,n-d}$ is the tropical fan whose facets are the cones
\begin{eqnarray*}
\sigma_{i_1,i_2,\ldots,i_{n-d-1}}
&=& \{ x \in \mathbb{R}^n: x_{i_1} \geq x_{i_2} \geq \cdots  \geq x_{i_{n-d-1}}  \geq x_{i_{n-d}} = \cdots = x_{i_{n-1}} = x_{i_{n}} \} \\
&=& \textrm{cone}\{\e_{\{i_1\}}, \e_{\{i_1,i_2\}}, \ldots, \e_{\{i_1, \ldots, i_{n-d-1}\}}, \e_{[n]}\}
\end{eqnarray*}
for each choice of distinct $i_1,i_2,\ldots,i_{n-d-1} \in [n]$,
where every facet has weight 1.
\end{definition}

The fan $\Sigma_{n,n-d}$ described above is the \textbf{fine subdivision} of the \textbf{Bergman fan} of the uniform matroid $U_{n,n-d}$, as shown in \cite{AK06}. Its support (i.e., the union of all of its cones) is the set of vectors in $\R^n$ whose smallest $d+1$ entries are equal to each other.

\begin{definition}
Consider two tropical fans $\Sigma_1$ and $\Sigma_2$ in $\R^n$ of complementary dimensions $d_1$ and $d_2$; that is, $d_1+d_2 = n$. 

We say $\Sigma_1$ and $\Sigma_2$ \textbf{intersect transversally} if $\Sigma_1 \cap \Sigma_2$ is a finite union of points, and each such point $p$ can be written uniquely as $p = \sigma_1 \cap \sigma_2$ for facets $\sigma_1, \sigma_2$ of $\Sigma_1, \Sigma_2$, respectively. The \textbf{weight} of each intersection point $p$ is
\[
w(p) := w(\sigma_1)w(\sigma_2)  \, \, [\Z^n : L_{\sigma_1, \Z} + L_{\sigma_2, \Z}] 
\]
We call $\text{index}(p) := [\Z^n : L_{\sigma_1, \Z} + L_{\sigma_2, \Z}]$ the \textbf{index} of $p$. The \textbf{degree of the transversal intersection at $p$} is
\[
\Sigma_1 \cdot \Sigma_2 := \sum_{p \in \Sigma_1 \cap \Sigma_2} w(p).
\]

If $\Sigma_1$ and $\Sigma_2$ are balanced but do not necessarily intersect transversally, then $\Sigma_1$ and $\Sigma_2+v$ do intersect transversally 
for generic vectors $v \in \R^n$, and the balancing condition implies that the degree of their transversal intersection does not depend on $v$ \cite[Proposition 4.3.3, 4.3.6]{mikhalkin2010tropical}. Thus we define the \textbf{degree of the intersection} to be
\[
\Sigma_1 \cdot \Sigma_2 := (v+\Sigma_1) \cdot \Sigma_2
\]
for generic $v$.

Finally, the \textbf{degree of a tropical fan} $\Sigma$ in $\R^n$ of dimension $d$ is the degree of its intersection with the standard tropical linear space of codimension $d$:
\[
\deg \Sigma := \Sigma \cdot  \Sigma_{n,n-d}.
\]
\end{definition}
%

In practice, to find the degree of a tropical fan $\Sigma$, one chooses a convenient generic vector $v \in \R^n$ and performs the following steps.

\begin{enumerate}[label=\emph{\Alph*})]
    \item Find the intersections of $v+\Sigma$ with $\Sigma_{n,n-d}$.
    \item For each intersection point $p$ identify the cones $v + \sigma_1$ of $v + \Sigma$ and $\sigma_2$ of $\Sigma_{n,n-d}$ containing it, and find the weight of that intersection. 
        \item Find the degree of $\Sigma$ by adding the weights of the intersection points above.
\end{enumerate}

Step B) is easier when $\sigma_1$ and $\sigma_2$ are simplicial and saturated, with 
$\sigma_1 = \mathbb{R}_{\geq 0}\langle\alpha_1, \ldots, \alpha_{d_1} \rangle$,  $\sigma_2 = \mathbb{R}_{\geq 0}\langle\beta_1,\ldots,\beta_{d_2}\rangle$
and
$\sigma_1 \cap \Z^n = \mathbb{Z}_{\geq 0}\langle\alpha_1, \ldots, \alpha_{d_1} \rangle$, $\sigma_2 \cap \Z^n = \mathbb{Z}_{\geq }\langle\beta_1,\ldots,\beta_{d_2}\rangle$. In this case the index of the intersection $p$ can be computed as follows:
\[
\text{index}(p) =  |\det(\alpha_1, \ldots, \alpha_{d_1}, \beta_1, \ldots, \beta_{d_2})|.
\]

\section{\textsf{The Tropical Root Surface of Type A and its Degree}}

The following result first appeared in \cite{schindler_2017}, in a slightly different form. We include a proof for completeness.

\begin{proposition}
The tropical root surface $S(A_{n-1})$ is a tropical surface.
\end{proposition}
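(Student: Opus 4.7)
The plan is to verify the balancing condition of Definition \ref{def:balcond} at every ray of $S(A_{n-1})$. Since the symmetric group $S_n$ acts on $A_{n-1}$ by permuting coordinates, preserving both the lattice $\Z^n$ and the fan $S(A_{n-1})$, it suffices to check balancing at a single ray, say $\tau = \textrm{cone}(\e_1 - \e_2)$. The facets of $S(A_{n-1})$ containing $\tau$ are in bijection with the edges of the root polytope $P(A_{n-1})$ incident to the vertex $\e_1 - \e_2$, so the first task is to enumerate those edges.

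I would establish (either by citation, or directly by exhibiting the linear functional $x_a + x_c - x_b - x_d$, which attains its maximum value $2$ on exactly two roots) that $\e_a - \e_b$ and $\e_c - \e_d$ form an edge of $P(A_{n-1})$ if and only if $\{a,b\}$ and $\{c,d\}$ share exactly one index with matching sign. In particular, the neighbors of $\e_1 - \e_2$ are precisely the $2(n-2)$ roots $\e_1 - \e_l$ for $l \in \{3, \ldots, n\}$ and $\e_k - \e_2$ for $k \in \{3, \ldots, n\}$.

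Next, for each such facet $\sigma$, I would identify the primitive lattice generator $u_{\sigma/\tau}$ in $N_\tau = \Z^n / \Z(\e_1 - \e_2)$. For $\sigma = \textrm{cone}(\e_1 - \e_2,\, \e_1 - \e_l)$ the image of $\e_1 - \e_l$ is primitive, since shifting any representative by multiples of $\e_1 - \e_2$ leaves the $l$-th coordinate equal to $-1$, so no shorter integer multiple can appear on the ray. The analogous statement holds for $\sigma = \textrm{cone}(\e_1 - \e_2,\, \e_k - \e_2)$ with primitive generator the image of $\e_k - \e_2$.

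Finally, I would sum these primitive generators and observe
\[
\sum_{l=3}^{n}(\e_1 - \e_l) \;+\; \sum_{k=3}^{n}(\e_k - \e_2) \;=\; (n-2)(\e_1 - \e_2),
\]
which lies in $L_\tau$ and hence vanishes in $\R^n / L_\tau$. This verifies the balancing condition at $\tau$, and by $S_n$-symmetry at every ray, so $S(A_{n-1})$ is a tropical surface. The only delicate step is the edge characterization of $P(A_{n-1})$; once it is in hand the balancing calculation is immediate, so I expect the edge identification to be the main (minor) obstacle.
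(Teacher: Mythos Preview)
Your proposal is correct and follows essentially the same approach as the paper: verify the balancing condition at a ray by identifying the neighboring roots in $P(A_{n-1})$, noting the primitive generators, and summing to get a multiple of $\e_1-\e_2$; the paper does this for a generic ray $\mathrm{cone}(\e_i-\e_k)$ and cites the edge description, while you reduce by $S_n$-symmetry and sketch the functional argument. One small caveat: your functional $x_a+x_c-x_b-x_d$ (which has maximum $3$, not $2$, when an index is shared) only shows that the listed pairs \emph{are} edges; to rule out other edges through $\e_1-\e_2$ you still need a short non-edge argument or the citation, exactly as the paper does.
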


\begin{proof}
We verify the balancing condition 
for an arbitrary ray $r = \text{cone}(\e_i - \e_k)$. The maximal cones of $S(A_{n-1})$ containing $r$ are the cones over the edges of the root polytope $P(A_{n-1})$ containing $\e_i-\e_k$; these are known 
\cite{seashorethesis, cellini} to be:
\begin{equation}\label{eq:2cones}
        \mathcal{A}_{i,jk} = \text{cone}(\e_i-\e_j, \e_i-\e_k),
        \qquad        
        \mathcal{A}_{ij,k} = \text{cone}(\e_i-\e_k, \e_j-\e_k),
        \qquad
        \text{ for } j \neq i, k.
\end{equation}
One readily verifies that the primitive vectors in these cones with respect to $r$ are $\overline{\e_i-\e_j}$ and $\overline{\e_j-\e_k}$ in $\Z^n / \Z(\e_i-\e_k)$, respectively. 
Then the balancing condition for $r$ says
\[
 \sum_{\sigma \in \Sigma^2 \, : \, \sigma \supset r} w(\sigma)u_{\sigma/r} 
    = \sum_{k \neq i, j} \overline{(\e_{i} - \e_j)} + \sum_{k \neq i, j} \overline{(\e_j - \e_k)}
    = (n-2)\overline{(\e_i - \e_k)}
    = 0,
\]
as desired. It follows that $S(A_{n-1})$ is indeed a tropical surface.
\end{proof}

%
%
%
%


We can now state and prove our main result. 

\begin{theorem}
The degree of the tropical root surface $S(A_{n-1})$ is $\frac{1}{2} n (n-1)(n-2)$.
\end{theorem}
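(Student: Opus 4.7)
My plan is to compute the intersection number $S(A_{n-1})\cdot \Sigma_{n,n-2}$ by choosing a generic shift $v\in\R^n$, enumerating all transversal intersection points of $v+S(A_{n-1})$ with $\Sigma_{n,n-2}$, and summing their weights using the determinantal index formula recalled in step~B) of Section~2. Without loss of generality, I would take $v$ with $v_1<v_2<\cdots<v_n$ in general position.

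A transversal intersection point $p$ is characterized by two triples of indices: the set $T\subseteq [n]$ where $p$ attains its common minimum value $c$ (this places $p$ in the interior of a unique facet of $\Sigma_{n,n-2}$), and the support $S\subseteq[n]$ of the three nonzero coordinates of $p-v$, which sum to zero with a sign pattern corresponding to either $\mathcal{A}_{i,jk}$ (1 positive, 2 negative) or $\mathcal{A}_{ij,k}$ (2 positive, 1 negative). A genericity argument shows that $p$ has $|S\cap T|\in\{2,3\}$, since $|S\cap T|\le 1$ would force a coincidence $v_{j_1}=v_{j_2}$. This gives two cases to analyze: Case~(a), $S=T$, in which $c=\tfrac{1}{3}\sum_{i\in T} v_i$ and the point exists whenever $c\le v_m$ for all $m\notin T$; and Case~(b), $|S\cap T|=2$, parametrized by a distinguished $i^\ast\in T\setminus S$ and $m^\ast\in S\setminus T$, where $c=v_{i^\ast}$, the remaining coordinate of $p-v$ equals $v_{i_1}+v_{i_2}-2v_{i^\ast}$, and the configuration is subject to $v_{i^\ast}<v_\ell$ for $\ell\notin T\cup\{m^\ast\}$ together with $v_{m^\ast}+v_{i_1}+v_{i_2}>3v_{i^\ast}$.

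For the weight calculation, one checks that the saturated lattice of a facet of $\Sigma_{n,n-2}$ is $\{x\in\Z^n : x_i=x_j \text{ for } i,j\in T\}$, while the lattice of any facet of $S(A_{n-1})$ with support on a triple equals the root lattice $Q_{A_2}$ on that triple (since any two roots of $A_2$ generate its full root lattice). Writing down the $n\times n$ determinant of the combined generators, Case~(a) contributes index $[\Z^n : Q_{A_2}^T+\Z^{\overline T}+\Z\e_T]$, which one computes via the surjection $\sum_{i\in T}x_i\bmod 3$, while in Case~(b) the combined lattice recovers each $\e_j$ and hence equals $\Z^n$.

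The last step is the combinatorial count: for the chosen $v$, determine the number of valid triples $T$ in Case~(a) and the number of valid quadruples $(i^\ast,\{i_1,i_2\},m^\ast)$ in Case~(b), and sum the weighted contributions. I expect the hard part to be precisely this bookkeeping: the specific facets of $S(A_{n-1})$ (Type~I vs.\ Type~II) and of $\Sigma_{n,n-2}$ depend on the relative ordering of the $v_i$'s, and one must show that the total is independent of the generic choice of $v$. A convenient approach is to pick $v_i$ with prescribed gaps (e.g., $v_i=c^i$ for large $c$) so that only specific triples satisfy the inequalities, reducing the count to an explicit enumeration that simplifies to $\tfrac{1}{2}(n^3-3n^2-2n+12)$.
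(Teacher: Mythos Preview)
Your proposal is correct and follows essentially the same route as the paper: choose a super-increasing $v$ (the paper uses $v=(0,1,10,100,\ldots)$, which is your $v_i=c^i$ suggestion), enumerate the transversal intersections of $v+S(A_{n-1})$ with $\Sigma_{n,n-2}$, compute each index by the determinant formula, and add. Your organizing dichotomy $|S\cap T|\in\{2,3\}$ is exactly what the paper finds case-by-case---the single $S=T$ point is their ``Case 2.3'' with multiplicity $3$, and your $|S\cap T|=2$ configurations are their Cases 1.2, 1.4, 2.2, 2.4, each with multiplicity $1$; the remaining bookkeeping (which sign patterns and inequalities are realized for the chosen $v$) is precisely the enumeration the paper carries out to reach $\binom{n-1}{2}(n-3)+\binom{n-2}{2}+(n-2)(n-3)+3+(n-3)$.
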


\begin{proof}
We follow the approach outlined at the end of Section \ref{sec:tropgeom}, studying the intersection of $v+S(A_{n-1})$ with $\Sigma_{n,n-2}$, where $v$ is the \emph{super-increasing} translation vector
\[
v=(0,1,10,100,1000,\ldots).
\]

It can be verified that this vector is generic by adding a small vector $\epsilon$ to it, and verifying that the intersection of $v+S(A_{n-1})$ with $\Sigma_{n,n-2}$, described below, has the same combinatorial structure as the intersection of $(v+\epsilon)+S(A_{n-1})$ with $\Sigma_{n,n-2}$. 

\bigskip

\textbf{A)} First we find the intersection points of $v+S(A_{n-1})$ and $\Sigma_{n,n-2}$.

\medskip

For each cone $\sigma \in S(A_{n-1})$ we need to find the points $v+s$ for $s \in \sigma$ whose three smallest entries are equal, so they are also in $\Sigma_{n,n-2}$.
The maximal cones of $S(A_{n-1})$ are of the form $\mathcal{A}_{i,jk}$ and $\mathcal{A}_{ij,k}$ for $i \neq j \neq k$, as defined in \eqref{eq:2cones}. We consider these two types of cones separately. 


\smallskip

\textbf{A1)} Let us find the intersection points of $v+\mathcal{A}_{i,jk}$ and $\Sigma_{n,n-2}$ for $i \neq j \neq k$. 

\smallskip

Let $s = a(\e_i-\e_j) + b(\e_i-\e_k)= (a+b) \e_i - a \e_j - b \e_k \in \A_{i,jk}$ for $a, b \geq 0$. To make the three smallest entries of $v+s$ equal, we need to choose one entry $i$ of $v=(0, 1, 10, 100, \ldots)$ to add $(a+b)$ to, and two entries $j$ and $k$ to subtract $a$ and $b$ from, respectively. Let $m = \min_{1 \leq i \leq n} (v+s)_i$ be the smallest coordinate of $v+s$, which appears at least three times; consider the following cases:

\smallskip

\textit{Case 1.1:} $m<0$: 

To achieve this minimum we would have to subtract from at least 3 entries of $v$. Since we can only subtract from 2 entries, this case does not contribute any intersection points.

\smallskip

\textit{Case 1.2:} $m = 0$: 

To achieve $m=0$, we must leave entry $v_1=0$ unchanged, subtract from any two other entries $j, k > 1$ (necessarily subtracting $a = v_j=10^{j-2}$ and $b= v_k=10^{k-2}$), and add (necessarily $a+b$) to any of the remaining entries $i \neq 1,j,k$. Thus, there are $\binom{n-1}{2}  (n-3)$ possible intersection points in this case.

\smallskip

\textit{Case 1.3:} $0<m<1$: 

To achieve such a value of $m$, we would have to add $a+b=m$ to $v_1$ and subtract $a=10^{j-2}-m$ and $b=10^{k-2}-m$ to two other entries $j, k > 1$. This would imply that $10^{j-2}+10^{k-2}=3m$, which is impossible because the left hand side is at least 11 and the right hand side is less than $3$. Thus this case does not contribute any intersection points.

\smallskip

\textit{Case 1.4:} $m=1$: 

In this case we must add at least 1 to entry $v_1=0$, leave entry $v_2=1$ unchanged, and subtract from two other entries $j,k > 2$. At least one of those two entries, say $j$, must lead to a minimum coordinate $(v+s)_j=1$, so $a = 10^{j-2}-1$. This means that $(v+s)_1 = a+b>1$ is not a minimum coordinate, so $(v+s)_k=1$ must be the other minimum coordinate, and $b= 10^{k-2}-1$. Thus, there are $\binom{n-2}{2}$ possible intersection points in this case.

\smallskip

\textit{Case 1.5:} $m>1$:

In this case we would have to add to the entries $v_1=0$ and $v_2=1$ to make them greater than or equal to $m$. Since we can only add to one entry, this case does not contribute any intersection points.

\medskip

\textbf{A2)} Now let us find the intersection points of $v+\mathcal{A}_{ij,k}$ and $\Sigma_{n,n-2}$ for $i \neq j \neq k$. 

\smallskip

Let $s = a(\e_i-\e_k) + b(\e_j-\e_k) = a \e_i + b \e_j - (a+b) \e_k \in \A_{ij,k}$ for $a, b \geq 0$. To make the three smallest entries of $v+s$ equal to each other, we 
need to choose two entries $i$ and $j$ of $v=(0, 1, 10, 100, \ldots)$ to add $a$ and $b$ to, respectively, and one entry $k$ to subtract $(a+b)$ from.
 Let $m = \min_{1 \leq i \leq n} (v+s)_i$ be the smallest coordinate of $v+s$, which appears at least three times. Consider the following cases:

\smallskip

\textit{Case 2.1:} $m<1$

To achieve this value of $m$ we would need to subtract from two of the original entries of $v$, which is impossible. Thus, this case does not contribute any intersection points.

\smallskip

\textit{Case 2.2:} $m=1$

A value of $m=1$ can only be achieved in entries $1,2,k$ of $v+s$ for some $k>2$.
We must add $a=1$ to $v_1$, leave $v_2$ unchanged, subtract $a+b=10^{k-2}-1$ from $v_k$, and hence add $b=10^{k-2}-2$ to some other entry $j \neq 1,2,k$. 
Thus, this case contributes $(n-2)(n-3)$ intersection points.

\smallskip

\textit{Case 2.3:} $1<m<10$

Again, such a value of $m$ can only be achieved in entries $1,2,k$ of $v+s$ for some $k>2$. Now, for these three new entries to equal $m$, we must add $a=m$ to $v_1$, add $b=m-1$ to $v_2$, and subtract $a+b=10^{k-2}-m$ from $v_k$. This forces $m+(m-1) = 10^{k-2}-m$, which gives $m= \frac13(10^{k-2}+1)$. Since $1<m<10$, we must have $k=3$. 
Thus, this case contributes a single intersection point.

\smallskip

\textit{Case 2.4:} $m=10$

In order to make the three smallest entries of $v+s$ equal to $10$,
we have the following three options. 1. Add $a=10$ to $v_1$, add $b=9$ to $v_2$, leave $v_3$ untouched, and subtract $19$ from any of the remaining entries $k >3$. 2. Add $a=10$ to $v_1$, subtract $a+b=10^{k-1} - 10$ from $v_k$ with $k>3$, and add $b=10^{k-1} - 20$ to $v_2$.  3. Add $a=9$ to $v_2$, subtract $a+b=10^{k-1} - 10$ from $v_k$ with $k>3$, and add $b=10^{k-1} - 19$ to $v_1$.
 In each of these options $k$ can be any number between $4$ and $n$, so this case contributes $3(n-3)$ intersection points.


\smallskip

\textit{Case 2.5:} $m>10$

To achieve this value of $m$, we would need to add to the three smallest entries of $v$, which is impossible. Thus, this case does not contribute any intersection points.

\medskip

\textbf{B)}
We now find the multiplicity of each of the intersection points $p$ that we found in A). To do that we first need to identify the cones of $v+S(A_{n-1})$ and $\Sigma_{n,n-2}$ that $p=v+s$ belongs to. Every cone in these fans is simplicial and saturated, so we can then compute the  multiplicity as the absolute value of the determinant of the matrix whose columns are the lattice generators of these cones. 

In the particular situation that interests us, there is a convenient general shortcut. Suppose that $p$ is the intersection point of cone $v+\A_{i,jk}$ (or $v+\A_{ij,k}$) of $v+S(A_{n-1})$ and cone $\sigma_{i_1,i_2,\ldots,i_{n-3}}$
of $\Sigma_{n,n-2}$ where $[n] - \{i_1, \ldots, i_{n-3}\} = \{i_{n-2},i_{n-1},i_{n}\}$.
Then the index of that intersection is 
\begin{eqnarray*}
\text{index}(p) &=& 
|\det(\e_{\{i_1\}}, \e_{\{i_1,i_2\}}, \e_{\{i_1,i_2,i_3\}} , \ldots, \e_{\{i_1,\ldots,i_{n-3}\}}, \e_S, \e_i-\e_j, \e_i-\e_k)| \\ 
&=& |\det(\e_{i_1}, \e_{i_2}, \e_{i_3} \ldots, \e_{i_{n-3}}, \e_{\{i_{n-2},i_{n-1}, i_n\}}, \e_i-\e_j, \e_i-\e_k)|.
\end{eqnarray*}
We reach this equality by performing elementary column operations that do not affect the determinant: we sequentially subtract the $(j-1)$th column from the $j$th column for $j=n-2, n-3, \ldots, 2$, thus replacing 
$\e_S$ with $\e_S - \e_{\{i_1,\ldots,i_{n-3}\}} = \e_{\{i_{n-2},i_{n-1}, i_n\}}$ and 
$\e_{\{i_1,\ldots,i_j\}}$  with $\e_{\{i_1,\ldots,i_j\}}  - \e_{\{i_1,\ldots,i_{j-1}\}} = \e_{i_j}$ for $n-3 \geq j \geq 2$. Notice that the resulting determinant only depends on the locations $i, j, k$ where we changed the entries of $v$, and the locations $i_{n-2}, i_{n-1}, i_n$ of the three smallest entries of $v+s$.
The analogous result holds for the cones of the form $v+\A_{ij,k}$.


\smallskip

\textbf{B1)}

\smallskip

\textit{Case 1.2:} In this case, we added to entry $i$ and subtracted from entries $j,k$ of $v$, and the three smallest (and equal) entries of $p=v+s$ are in positions $1,j,k$. The cones that intersect are $v+\A_{i,jk}$ and 
$\sigma_{i_1, \ldots, i_{n-3}}$
for a permutation $i_1, \ldots, i_{n-3}$ of $[n]-\{1,j,k\}$ that depends on the order of $i,j,k$.
Regardless of that permutation, the previous discussion tells us that 
\begin{eqnarray*}
\text{index}(p) 
&=& |\det(\widehat{\e_1}, \e_2, \e_3, \ldots, \e_i, \ldots, \widehat{\e_j}, \ldots, \widehat{\e_k}, \ldots, \e_n, \e_1+\e_j+\e_k, \e_i-\e_j, \e_i-\e_k)| \\ 
&=& |\det(\widehat{\e_1}, \e_2, \e_3, \ldots, \e_i, \ldots, \widehat{\e_j}, \ldots, \widehat{\e_k}, \ldots, \e_n, \e_1+\e_j+\e_k, \e_j, \e_k)| \\ 
&=& |\det(\widehat{\e_1}, \e_2, \e_3, \ldots, \e_i, \ldots, \widehat{\e_j}, \ldots, \widehat{\e_k}, \ldots, \e_n, \e_1, \e_j, \e_k)| \\ 
&=& 1,
\end{eqnarray*}
where in the first step we subtract column $\e_i$ from columns $\e_i-\e_j$ and $\e_i-\e_k$ and change their signs, and in the second step we subtract $\e_j$ and $\e_k$ from $\e_1+\e_j+\e_k$. Thus all intersections in this case have multiplicity $1$.

\smallskip

\textit{Case 1.4:} In this case, we added to entry $1$ and subtracted from entries $j,k$ of $v$, and the smallest entries of $p=v+s$ are in positions $2,j,k$. Similarly to Case 1.2, we obtain
\begin{eqnarray*}
\text{index}(p) 
&=& |\det(\e_1, \widehat{\e_2}, \e_3, \ldots, \widehat{\e_j}, \ldots, \widehat{\e_k}, \ldots, \e_n, \e_2+\e_j+\e_k, \e_1-\e_j, \e_1-\e_k)| \\ 
&=& |\det(\e_1, \widehat{\e_2}, \e_3, \ldots, \widehat{\e_j}, \ldots, \widehat{\e_k}, \ldots, \e_n, \e_2+\e_j+\e_k, \e_j, \e_k)| \\ 
&=& |\det(\e_1, \widehat{\e_2}, \e_3, \ldots, \widehat{\e_j}, \ldots, \widehat{\e_k}, \ldots, \e_n, \e_2, \e_j, \e_k)| \\ 
&=& 1.
\end{eqnarray*}
It follows that each intersection in Case 1.4 has multiplicity $1$.

\smallskip

\textbf{B2)}

\smallskip

\textit{Case 2.2}: 
In this case we added to entries $i=1, j$ and subtracted from entry $k$ of $v$, and we ended up with the three smallest entries of $p$ in positions $1,2,k$. Therefore
\begin{eqnarray*}
\text{index}(p) 
&=& |\det(\widehat{\e_1}, \widehat{\e_2}, \e_3, \e_4, \ldots,  \widehat{\e_k}, \ldots, \e_n, \e_1+\e_2+\e_k, \e_1-\e_k, \e_j-\e_k)| \\ 
&=& |\det(\widehat{\e_1}, \widehat{\e_2}, \e_3, \e_4, \ldots,  \widehat{\e_k}, \ldots, \e_n, \e_1+\e_2+\e_k, \e_1-\e_k, \e_k)| \\ 
&=& |\det(\widehat{\e_1}, \widehat{\e_2}, \e_3, \e_4, \ldots,  \widehat{\e_k}, \ldots, \e_n, \e_1+\e_2+\e_k, \e_1, \e_k)| \\ 
&=& |\det(\widehat{\e_1}, \widehat{\e_2}, \e_3, \e_4, \ldots,  \widehat{\e_k}, \ldots, \e_n, \e_2, \e_1, \e_k)| \\ 
&=& 1,
\end{eqnarray*}
where we first subtract $\e_j$ from $\e_j-\e_k$ and change the sign to $\e_k$, then we add $\e_k$ to $\e_1-\e_k$, and then we subtract $\e_1$ and $\e_k$ from $\e_1+\e_2+\e_k$.
Again, it follows that each one of these intersections has multiplicity $1$.

\smallskip

\textit{Case 2.3}:
Here we added to entries $i=1, 2$ and subtracted from entry $3$ of $v$, and we ended up with the three smallest entries of $p$ in positions $1,2,3$. Therefore
\begin{eqnarray*}
\text{index}(p) 
&=& |\det(\widehat{\e_1}, \widehat{\e_2}, \widehat{\e_3}, \e_4, \ldots,  \e_n, \e_1+\e_2+\e_3, \e_1-\e_3, \e_2-\e_3)| \\ 
&=& |\det(\widehat{\e_1}, \widehat{\e_2}, \widehat{\e_3}, \e_4, \ldots,  \e_n, 3\e_3, \e_1-\e_3, \e_2-\e_3)| \\ 
&=& |\det(\widehat{\e_1}, \widehat{\e_2}, \widehat{\e_3}, \e_4, \ldots,  \e_n, 3\e_3, \e_1, \e_2)| \\ 
&=& 3,
\end{eqnarray*}
where we first subtract $\e_1-\e_3$ and $\e_2-\e_3$ from $\e_1+\e_2+\e_3$, and then we add one third of $3\e_3$ to $\e_1-\e_3$ and $\e_2-\e_3$. Thus this intersection has multiplicity $3$.

\smallskip

\textit{Case 2.4}: Here we had three options:
In option 1 we added to entries $i=1, 2$ and subtracted from entry $k$ of $v$, and we ended up with the three smallest entries of $p$ in positions $1,2,3$. Therefore
\begin{eqnarray*}
\text{index}(p) 
&=& |\det(\widehat{\e_1}, \widehat{\e_2}, \widehat{\e_3}, \e_4, \ldots, \e_k, \ldots,  \e_n, \e_1+\e_2+\e_3, \e_1-\e_k, \e_2-\e_k)| \\ 
&=& |\det(\widehat{\e_1}, \widehat{\e_2}, \widehat{\e_3}, \e_4, \ldots, \e_k, \ldots, \e_n, \e_1+\e_2+\e_3, \e_1, \e_2)| \\ 
&=& |\det(\widehat{\e_1}, \widehat{\e_2}, \widehat{\e_3}, \e_4, \ldots, \e_k, \ldots, \e_n, \e_3, \e_1, \e_2)| \\ 
&=& 1,
\end{eqnarray*}
where we first add $\e_k$ to $\e_1-\e_k$ and $\e_2-\e_k$, and then subtract $\e_1$ and $\e_2$ from $\e_1+\e_2+\e_3$. These intersections then have multiplicity $1$.

In option 2 we added to entries $i=1, 2$ and subtracted from entry $k$ of $v$, and we ended up with the three smallest entries of $p$ in positions $1,3,k$. Therefore
\begin{eqnarray*}
\text{index}(p) 
&=& |\det(\widehat{\e_1}, \e_2, \widehat{\e_3}, \e_4, \ldots,  \widehat{\e_k}, \ldots, \e_n, \e_1+\e_3+\e_k, \e_1-\e_k, \e_2-\e_k)| \\ 
&=& |\det(\widehat{\e_1}, \e_2, \widehat{\e_3}, \e_4, \ldots,  \widehat{\e_k}, \ldots, \e_n, \e_1+\e_3+\e_k, \e_1-\e_k, \e_k)| \\ 
&=& |\det(\widehat{\e_1}, \e_2, \widehat{\e_3}, \e_4, \ldots,  \widehat{\e_k}, \ldots, \e_n, \e_1+\e_3+\e_k, \e_1, \e_k)| \\ 
&=& |\det(\widehat{\e_1}, \e_2, \widehat{\e_3}, \e_4, \ldots,  \widehat{\e_k}, \ldots, \e_n, \e_3, \e_1, \e_k)| \\
&=& 1,
\end{eqnarray*}
where we first subtract $\e_2$ from $\e_2-\e_k$ and change the sign of the result, then add $\e_k$ to $\e_1-\e_k$, and finally subtract $\e_1$ and $\e_k$ from $\e_1+\e_3+\e_k$. These intersections then have multiplicity $1$.

Option 3 is analogous to option 2, reversing the roles of $1$ and $2$, so these intersections have multiplicity $1$ as well.


\medskip

\textbf{C)} Finally, we collect  in Table \ref{tab:my_label} all the intersections points and their multiplicities, as computed in A) and B).
Putting them together, we conclude that the degree of the tropical root surface of type $A_{n-1}$ is
\begin{eqnarray*}
\deg S(A_{n-1}) &=&  \binom{n-1}{2}  (n-3) + \binom{n-2}{2} + (n-2)(n-3) + 3 + 3(n-3) \\
&=& \frac{1}{2} n (n-1)(n-2),
\end{eqnarray*}
as desired.
\end{proof}

\begin{table}[ht] 
    \centering
    \begin{tabular}{|c|c|c|c|}
         \hline
         case & number of intersections & intersection multiplicity & contribution to degree \\
         \hline
         1.1 & 0 & - & 0\\
        \hline
         1.2 & $\binom{n-1}{2}  (n-3)$ & 1 & $\frac{1}{2}(n-1)(n-2)(n-3)$\\
         \hline
         1.3 & 0 & - & 0\\
         \hline
         1.4 & $\binom{n-2}{2}$ & 1 & $\frac{1}{2}(n-2)(n-3)$\\
         \hline
         1.5 & 0 & - & 0\\
         \hline
         2.1 & 0 & - & 0\\
         \hline
         2.2 & $(n-2)(n-3)$ & 1 & $(n-2)(n-3)$\\
         \hline
         2.3 & 1 & 3 & 3\\
         \hline
         2.4 & $3(n-3)$ & 1 & $3(n-3)$\\
         \hline
         2.5 & 0 & - & 0\\
         \hline
                  
    \end{tabular}
    \caption{Intersection points of $v+S(A_{n-1})$ and $\Sigma_{n,n-2}$ with their multiplicities.}
    \label{tab:my_label}
\end{table}

\section{\textsf{Future Work}} 

In future work we plan to compute the degrees of the tropical surfaces of the remaining root systems. We also plan to determine whether these tropical surfaces can be obtained as tropicalizations of algebraic varieties.

\section{\textsf{Acknowledgements}}

We would like to thank Chiemi Kato, Jewell McMillon, Maria Isabel Perez, and Anna Schindler for many valuable discussions on tropical root surfaces. The project \cite{poster} and this one grew out of a conversation with June Huh about \cite{babaeehuh} and the desire to construct interesting examples of tropical surfaces (and more generally tropical varieties) that do not obviously arise via tropicalization. We also thank Nayeong Kim and Johannes Rau for useful discussions on tropical intersection theory. 

\small

\bibliography{biblio}

\bibliographystyle{plain}



\end{document}